\documentclass[11pt]{amsart}

\usepackage[T1]{fontenc}
\usepackage[utf8]{inputenc}
\usepackage{lmodern}
\usepackage{amsmath,amssymb,amsthm,mathtools}
\usepackage{microtype}
\usepackage[colorlinks=true,linkcolor=blue,citecolor=blue,urlcolor=blue]{hyperref}

\newtheorem{theorem}{Theorem}[section]

\newtheorem{lemma}[theorem]{Lemma}
\newtheorem{corollary}[theorem]{Corollary}
\newtheorem{fact}[theorem]{Fact}
\theoremstyle{definition}
\newtheorem{definition}[theorem]{Definition}

\theoremstyle{remark}
\newtheorem{remark}[theorem]{Remark}

\newcommand{\om}{\omega}
\newcommand{\omone}{\omega_1}
\newcommand{\Om}{\Omega}
\newcommand{\PP}{\mathbb P}
\newcommand{\QQ}{\mathbb Q}

\newcommand{\II}{\mathcal I}
\newcommand{\RO}{\operatorname{RO}}
\newcommand{\otp}{\operatorname{otp}}
\newcommand{\MA}{\operatorname{MA}}
\newcommand{\forces}{\Vdash}
\newcommand{\notarrow}{\nrightarrow}
\newcommand{\frakb}{\mathfrak b}
\newcommand{\frakd}{\mathfrak d}
\newcommand{\frakc}{\mathfrak c}

\title[Martin's axiom and $\omega_1^2 \longrightarrow (\omega_1^2, 3)^2$]
{Martin's axiom and $\omega_1^2 \longrightarrow (\omega_1^2, 3)^2$}
\author {Mohammad Golshani}
\address{School of Mathematics\\
 Institute for Research in Fundamental Sciences (IPM)\\
  P.O. Box:
19395-5746\\
 Tehran-Iran.}
\email{golshani.m@gmail.com}
\urladdr{http://math.ipm.ac.ir/~golshani/}
\subjclass[2020]{Primary 03E02; Secondary 03E35, 03E40}
\keywords{ordinal partition relations, Martin's Axiom, c.c.c. forcing,
\(\sigma\)-centered forcing}

\begin{document}

\begin{abstract}
Starting with CH and Hajnal's coloring, we show that a standard finite-support
iteration of \(\sigma\)-centered forcing notions gives a model of
\[
 \MA_{\omone}(\sigma\text{-centered})+2^{\aleph_0}=\aleph_2
 +\omega_1^2\notarrow(\omega_1^2,3)^2.
\]
We also isolate a simple
forcing-preservation principle: the same ground-model coloring remains
a witness after forcing with any poset whose subfamilies of size at most
\(\omone\) are countable unions of linked sets.  Under
\(\MA_{\omone}\), every c.c.c. forcing has this local property, so every
existing witness is preserved by every c.c.c. forcing over that model. 
\end{abstract}

\maketitle

\section{Introduction}
The arrow notation used here goes back to Rado and the development of
the partition calculus by Erd\H{o}s, Hajnal, and Rado. 
For ordinals \(\alpha,\beta,\gamma\), the relation
\[
  \alpha\longrightarrow(\beta,\gamma)^2
\]
means that every coloring \(c:[\alpha]^2\to 2\) has either a
color-\(0\) homogeneous subset of order type \(\beta\), or a
color-\(1\) homogeneous subset of order type \(\gamma\).  Equivalently,
every graph on \(\alpha\) has an independent set of order type \(\beta\)
or a clique of order type \(\gamma\).

We are concerned with the Erd\H{o}s--Hajnal problem
\begin{equation}\label{eq:main-positive}
  \omone^2\longrightarrow(\omone^2,3)^2.
\end{equation}
Here and below \(\omone^2\) denotes ordinal multiplication
\(\omone\cdot\omone\), not merely the cardinal \(\aleph_1\). Hajnal proved that CH implies the failure of
\eqref{eq:main-positive} \cite{Hajnal1971}.  
The construction organizes a triangle-free graph using a set mapping and
an enumeration of the relevant small configurations.  CH supplies the
enumeration at length \(\omone\).
Larson revisited Hajnal's construction in 1998 and replaced CH by the
existence of a short scale \cite{Larson1998}, in particular, she showed that.
\begin{equation}\label{eq:larson}
  \frakd=\aleph_1\quad\Longrightarrow\quad
  \omone^2\notarrow(\omone^2,3)^2.
\end{equation}
There are positive forcing-axiom results for related, smaller ordinal
partition problems.  For example, Baumgartner proved, from a suitable
form of Martin's Axiom, that
\(\omone\cdot\om\to(\omone\cdot\om,3)^2\); see
\cite{Baumgartner1989,ErdosProblem1171}.

Erd\H{o}s and Hajnal have
asked whether
\[
  \MA_{\aleph_1}+2^{\aleph_0}=\aleph_2
\]
implies \eqref{eq:main-positive}; see
\cite{ErdosHajnal1971,ErdosHajnal1974,ErdosProblemPage,Komjath2025}.
The question is still  open.  Baumgartner asked  whether PFA implies \eqref{eq:main-positive}.

In this paper we study this question and obtain some results around it.
The first main result of the paper is the following.
\begin{theorem}\label{thm:intro-ma}
Assume \(\MA_{\aleph_1}\).  If \(c\) is a witness to
\(\omone^2\notarrow(\omone^2,3)^2\), then every c.c.c. forcing preserves
that same witness.
\end{theorem}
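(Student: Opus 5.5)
The plan is to split the statement into two parts. The first is a general preservation lemma: if \(\PP\) is a poset such that every subfamily of \(\PP\) of size at most \(\omone\) is a countable union of linked sets, then any ground-model witness \(c\) to \(\omone^2\notarrow(\omone^2,3)^2\) remains a witness in \(V^{\PP}\). The second is the observation that under \(\MA_{\omone}\) every c.c.c. poset has this local property. Since \(c\) having no color-\(1\) triangle is a \(\Delta_0\) property of \(c\), it is absolute. So the only thing to check is that \(\PP\) adds no color-\(0\) homogeneous set of order type \(\omone^2\).

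\textbf{Preservation lemma.} Suppose \(p\) forces that \(\dot X\subseteq\omone^2\) is \(c\)-homogeneous of color \(0\) with \(\otp(\dot X)=\omone^2\). Let \(\dot\xi_\alpha\), for \(\alpha<\omone^2\), name the \(\alpha\)-th element of \(\dot X\) in increasing order.
\begin{enumerate}
\item For each \(\alpha<\omone^2\), choose \(p_\alpha\le p\) and an ordinal \(\xi_\alpha\) with \(p_\alpha\forces\dot\xi_\alpha=\xi_\alpha\).
\item The family \(\{p_\alpha:\alpha<\omone^2\}\) has size \(\aleph_1\), so it splits as \(\bigcup_n L_n\) with each \(L_n\) linked. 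Put \(A_n=\{\alpha: p_\alpha\in L_n\}\).
\item If \(\alpha<\beta\) both lie in \(A_n\), a common extension of \(p_\alpha,p_\beta\) forces \(\xi_\alpha<\xi_\beta\) and \(c(\xi_\alpha,\xi_\beta)=0\). Both facts are ground-model facts, so they simply hold.
\item Hence \(\alpha\mapsto\xi_\alpha\) is strictly increasing on \(A_n\), and \(Y_n=\{\xi_\alpha:\alpha\in A_n\}\) is a ground-model color-\(0\) homogeneous set with \(\otp(Y_n)=\otp(A_n)\).
\end{enumerate}
It remains to see that some \(A_n\) has order type \(\omone^2\), i.e.\ that \(\omone^2\to(\omone^2)^1_{\om}\). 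For each block \([\omone\cdot\gamma,\omone\cdot(\gamma+1))\), some \(A_{n_\gamma}\) meets it in an uncountable set. Some \(n\) equals \(n_\gamma\) for uncountably many \(\gamma\), and that \(A_n\) has order type \(\omone^2\). This contradicts the fact that \(c\) is a witness in \(V\).

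\textbf{Local property under \(\MA_{\omone}\).} This is the step I expect to need the most care, because a subset of \(\PP\) need not be a c.c.c. suborder with the inherited compatibility. Given \(A\subseteq\PP\) with \(|A|\le\aleph_1\), take an elementary submodel \(M\prec H(\theta)\) of size \(\aleph_1\) with \(\PP,A\in M\) and \(A\subseteq M\), and let \(\QQ=\PP\cap M\).
\begin{enumerate}
\item By elementarity, if two elements of \(\QQ\) are compatible in \(\PP\), a common extension exists in \(M\). So compatibility in \(\QQ\) agrees with compatibility in \(\PP\).
\item Consequently every antichain of \(\QQ\) is an antichain of \(\PP\), and \(\QQ\) is a c.c.c. poset of size \(\aleph_1\).
\item By the standard consequence of \(\MA_{\omone}\) that every c.c.c. poset of size \(\le\aleph_1\) is \(\sigma\)-centered, \(\QQ\) is a countable union of centered sets.
\item Centered in \(\QQ\) implies pairwise compatible in \(\PP\), so \(A\subseteq\QQ\) is a countable union of sets linked in \(\PP\).
\end{enumerate}
To justify step 3, given \(\QQ\) of size \(\aleph_1\) I would force with the finite-support c.c.c. poset adding a function \(f:\QQ\to\om\) whose fibers are centered. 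I would use \(\MA_{\omone}\) for the \(\aleph_1\) dense sets saying that each \(q\in\QQ\) is in the domain of the generic condition; the content to verify here is that this poset is c.c.c.

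If that step gives trouble, I would instead cite the known theorem, due to Todorcevic and Veličković, that \(\MA_{\omone}\) implies every c.c.c. poset of size \(\aleph_1\) is \(\sigma\)-centered. Combining the lemma with the local property gives Theorem~\ref{thm:intro-ma}.
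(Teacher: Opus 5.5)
Your argument is correct and has the same architecture as the paper's: a preservation theorem for posets whose subfamilies of size $\le\aleph_1$ are countable unions of linked sets (Theorem~\ref{thm:preservation}), combined with the fact that under $\MA_{\aleph_1}$ c.c.c.\ posets of size $\aleph_1$ are $\sigma$-centered (Fact~\ref{fact:ma-small}, Lemma~\ref{lem:ma-local}). You implement both halves differently, though. For preservation, the paper takes the set $X$ of all $x$ that some $p_x\le p$ forces into $\dot H$, notes $p\forces\dot H\subseteq\check X$ so $\otp(X)=\Om$, and uses the $\sigma$-ideal property of short sets (Lemma~\ref{lem:rows}) to find a linked piece $X_n$ of full order type. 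You instead decide the increasing enumeration $\dot\xi_\alpha$ and use compatibility to get both homogeneity and monotonicity of $\alpha\mapsto\xi_\alpha$ on each $A_n$. You therefore need only the special case $X=\Om$ of that lemma, namely $\omone^2\longrightarrow(\omone^2)^1_\om$, which you prove directly by the row pigeonhole; this is the same combinatorics in a slightly more self-contained form. For the local property, the paper passes to the subalgebra $B\subseteq\RO(\PP)$ generated by the regular-open values of members of $A$ and pulls a centered cover of $B^+$ back to $A$. This tacitly uses that finitely many conditions have nonzero meet in $\RO(\PP)$ iff they have a common extension in $\PP$. Your elementary submodel reflects compatibility while staying inside $\PP$. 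Both devices solve the problem you correctly flag, that an arbitrary $\aleph_1$-sized subset of $\PP$ need not be c.c.c.\ in the inherited order. Citing the $\sigma$-centeredness theorem is on par with the paper, which also only cites it. If you want your direct route instead, the c.c.c.\ of the finite-partial-function poset follows from a $\Delta$-system argument once you know every finite power $\QQ^k$ is c.c.c. $\MA_{\aleph_1}$ gives this, because it makes every c.c.c.\ poset Knaster.
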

We then prove the following partial answer to the question of Erd\H{o}s and Hajnal.

\begin{theorem}[Relative consistency]\label{thm:intro-consistency}
If ZFC is consistent, then so is
\[
 \mathrm{ZFC}+\MA_{\aleph_1}(\sigma\text{-centered})
 +2^{\aleph_0}=\aleph_2
 +\omone^2\notarrow(\omone^2,3)^2.
\]
The model can moreover satisfy
\(\frakb=\frakd=\frakc=\aleph_2\).
\end{theorem}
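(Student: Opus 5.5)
We start from a model of GCH (or just CH), fix there a Hajnal coloring \(c:[\omone^2]^2\to 2\) witnessing \(\omone^2\notarrow(\omone^2,3)^2\), and perform a finite-support iteration \(\langle \PP_\alpha,\dot\QQ_\alpha:\alpha<\om_2\rangle\) of length \(\om_2\). Each iterand \(\dot\QQ_\alpha\) is a \(\sigma\)-centered poset of size at most \(\aleph_1\), chosen by a bookkeeping function so that every \(\sigma\)-centered poset of size \(\aleph_1\) in the final model, together with every family of \(\aleph_1\) dense sets, is handled at some stage. We also interleave Hechler forcing cofinally often. By the standard Solovay--Tennenbaum argument, \(\PP_{\om_2}\) is c.c.c. and has a dense subset of size \(\aleph_2\). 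Since \(\aleph_2^{\aleph_1}=\aleph_2\) under GCH, the final model satisfies \(2^{\aleph_0}=\aleph_2\) and \(\MA_{\aleph_1}(\sigma\text{-centered})\). Restricting to posets of size \(\aleph_1\) suffices for \(\MA_{\aleph_1}\) by the usual Löwenheim--Skolem reduction. The Hechler reals, or already the Cohen reals added at limit stages, give \(\frakb=\frakd=\aleph_2\), and hence \(\frakb=\frakd=\frakc=\aleph_2\).

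It remains to show that \(c\) is still a witness in \(V^{\PP_{\om_2}}\). The triangle clause is absolute: \(c\) has no \(1\)-homogeneous triangle, since that is a statement about \(c\) alone. So the whole task is that no \(0\)-homogeneous set of order type \(\omone^2\) is added. Here I will use the preservation principle from the abstract. If \(\PP\) is a poset such that every subfamily of size at most \(\omone\) is a countable union of linked sets, then forcing with \(\PP\) preserves the ground-model witness. I plan to prove this principle (or cite it from the later section) roughly as follows. Suppose \(\dot A\) is forced to be \(0\)-homogeneous of type \(\omone^2\). Choose \(\omone^2\) many pairs \((p_\xi,\beta_\xi)\) with \(p_\xi\forces\beta_\xi\in\dot A\), arranged so that the \(\beta_\xi\) form a ground-model set of order type \(\omone^2\). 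These are \(\aleph_1\) many conditions, so we can split them into countably many linked pieces \(L_n\). Using the \(\sigma\)-additivity properties of the structure underlying Hajnal's coloring, some \(L_n\) still carries a set \(B\) of ordinals that is "large", meaning large enough that Hajnal's coloring forces some pair in \(B\) to receive color \(1\). Any two conditions in \(L_n\) are compatible, and a common extension forces both ordinals into \(\dot A\), which contradicts \(0\)-homogeneity. The key combinatorial input is the ground-model fact that the \(\sigma\)-ideal generated by sets \(B\) containing no color-\(1\) pair is \(\sigma\)-closed in this sense; that is, \(\omone^2\) is not a countable union of \(0\)-homogeneous sets of the relevant kind. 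This needs checking against the Hajnal construction.

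Next, we verify that \(\PP_{\om_2}\) has the local property. Any subfamily of size \(\aleph_1\) lives on a support of size \(\aleph_1\). Since every iterand is \(\sigma\)-centered, a finite-support iteration of \(\sigma\)-centered posets of length less than \(\om_2\)... The straightforward claim "of length \(<(2^{\aleph_0})^+\) is \(\sigma\)-centered" fails here because the continuum grows. So instead I will argue locally. Given \(\aleph_1\) conditions, we can assume by a density argument that they are determined in a nice way, with countable names for the centering functions. Using CH in the ground model and a \(\Delta\)-system/coding argument on supports of size \(\aleph_1\), we then partition them into countably many centered, hence linked, pieces. Concretely, I would first reduce to the case where each iterand is given with a centering \(\dot f_\alpha:\dot\QQ_\alpha\to\om\). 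Then I would show that the set of conditions \(p\) whose coordinates are decided by \(\PP_\alpha\) in the centering sense is dense. Finally I would map each such \(p\) to the finite partial function \(\alpha\mapsto f_\alpha(p(\alpha))\) on its support. Restricted to \(\aleph_1\) many coordinates, these partial functions can be coded by countably many colors using an \(\aleph_1\)-sized independent family of functions \(\omone\to\om\). This is the Engelking--Karłowicz type fact, which holds since \(\aleph_1\le\frakc\) in the ground model. Two conditions receiving the same color are then compatible.

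I expect that last step to be the main obstacle: getting the countable linked decomposition for \(\aleph_1\)-sized subfamilies of an \(\om_2\)-length iteration despite the supports and the need to decide centering values. Whichever of this or the ground-model combinatorial fact about Hajnal's coloring in the preservation principle needs more care, I will handle it by carefully organizing the dense set of determined conditions before coloring.
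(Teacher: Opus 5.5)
Your overall architecture is the paper's: a GCH ground model, a fixed Hajnal coloring $c$, a finite-support iteration of $\sigma$-centered posets of length $\om_2$, and survival of $c$ via the local $\omone$-$\sigma$-linked property. However, the crux of the preservation step is left unproved and pointed in the wrong direction.

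After you split the conditions into linked pieces $L_n$, the corresponding ordinals split as $X=\bigcup_n X_n$, and each $X_n$ is automatically $0$-homogeneous. What you then need is that a set $X$ of order type $\Om=\omone\cdot\omone$ is not a countable union of sets of order type $<\Om$. You defer this to ``checking against the Hajnal construction.'' You also state it only for $\omone^2$ itself, whereas it is needed for the arbitrary set $X$ of type $\Om$ read off from the name.

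Nothing about Hajnal's construction is involved. It is the purely order-theoretic Lemma~\ref{lem:rows}: $\otp(X)=\Om$ iff uncountably many rows $R_\xi=[\omone\cdot\xi,\omone\cdot(\xi+1))$ meet $X$ in an uncountable set. Suppose only rows with index $<\delta<\omone$ do. Then $X\cap(\omone\cdot\delta)$ has type at most $\omone\cdot\delta$, and the rest is an $\omone$-indexed sum of countable ordinals, of type at most $\omone$. So $\otp(X)\le\omone\cdot(\delta+1)<\Om$; the converse is clear. A countable union of sets, each meeting only countably many rows uncountably, again meets only countably many rows uncountably. Hence the sets of type $<\Om$ form a $\sigma$-ideal, so some $X_n$ has type $\Om$. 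That $X_n$ is a $0$-homogeneous set in $V$, contradicting that $c$ is a witness. This is why Theorem~\ref{thm:preservation} applies to every witness, not just Hajnal's.

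Your ``main obstacle'' is not one, and the reason you give for it is mistaken. The quoted iteration theorem concerns length $<\frakc^+$ with $\frakc$ computed in the ground model, so the growth of the continuum is irrelevant. Under CH in $V$, each $\PP_\alpha$ with $\alpha<\om_2$ has $|\alpha|\le\aleph_1=\frakc^V$ and is therefore $\sigma$-centered. Any $\aleph_1$ conditions of $\PP_{\om_2}$ have finite supports bounded below $\om_2$, hence lie in a single $\PP_\alpha$. That is the paper's whole argument.

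Your direct coding argument does work. It uses the dense set of conditions whose centering values are decided, the resulting finite partial functions in $V$, an Engelking--Kar\l{}owicz family of countably many functions on the $\aleph_1$-sized union $S$ of supports, and compatibility of same-colored conditions by induction on the largest coordinate. This is just the proof of that iteration theorem run on $S$. It needs no $\Delta$-system. Since $\aleph_1\le\frakc$ in ZFC, it even gives the local property for finite-support iterations of $\sigma$-centered posets of any length, without CH.

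Minor point: Cohen reals alone do not give $\frakb=\aleph_2$. Use the Hechler reals, or note that $\MA_{\aleph_1}(\sigma\text{-centered})$ already implies $\frakb\ge\aleph_2$.
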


\section{A preservation theorem}
Set
\[
  \Om=\omone\cdot\omone
\]
and divide \(\Om\) into its canonical rows
\[
  R_\xi=[\omone\cdot\xi,\omone\cdot(\xi+1))
  \qquad(\xi<\omone).
\]
Define
\[
  \II=\{X\subseteq\Om:\otp(X)<\Om\}.
\]
The proof of the preservation theorem rests on the following elementary
description of \(\II\).

\begin{lemma}\label{lem:rows}
For \(X\subseteq\Om\), the following are equivalent.
\begin{enumerate}
\item \(\otp(X)=\Om\).
\item The set
\[
 S_X=\{\xi<\omone:|X\cap R_\xi|=\aleph_1\}
\]
is uncountable.
\end{enumerate}
Consequently, \(\II\) is a \(\sigma\)-ideal.
\end{lemma}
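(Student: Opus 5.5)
We prove the two implications separately, using only basic facts about order types of subsets of ordinals, and then read off the ideal property from (2).

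\emph{(2)$\Rightarrow$(1).} Suppose $S_X$ is uncountable. Since $X\subseteq\Om$, we have $\otp(X)\le\Om$, so it suffices to show $\otp(X)\ge\Om$. For each $\xi\in S_X$, the set $X\cap R_\xi$ is an uncountable subset of the interval $R_\xi$, which has order type $\omone$; hence $\otp(X\cap R_\xi)=\omone$. Enumerate $S_X$ increasingly as $\langle\xi_\nu:\nu<\omone\rangle$, which is possible since $S_X\subseteq\omone$ is uncountable. Every element of $R_{\xi_\nu}$ precedes every element of $R_{\xi_\mu}$ when $\nu<\mu$. So $X\supseteq\bigcup_\nu (X\cap R_{\xi_\nu})$ contains an ordered sum of $\omone$ copies of $\omone$. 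Its order type is $\omone\cdot\omone=\Om$, and therefore $\otp(X)\ge\Om$.

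\emph{(1)$\Rightarrow$(2).} We argue by contraposition. Suppose $S_X$ is countable, and choose $\eta<\omone$ with $S_X\subseteq\eta$. Split $X$ into three pieces:
\[
X_0=X\cap\omone\cdot\eta,\qquad X_1=\bigcup_{\xi\ge\eta}(X\cap R_\xi).
\]
The piece $X_0$ is a subset of $\omone\cdot\eta$, so $\otp(X_0)\le\omone\cdot\eta<\Om$. Each row $R_\xi$ with $\xi\ge\eta$ meets $X$ in a countable set, so $X_1$ is a union of $\aleph_1$ countable sets. Its order type can still be as large as $\omone$, since its rows are placed in order. More precisely, $\otp(X_1)=\sum_{\xi\ge\eta}\otp(X\cap R_\xi)$, a sum of $\omone$ countable ordinals, and such a sum is at most $\omone\cdot\om$? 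That bound is not needed. Every proper initial segment of $X_1$ lies in countably many rows and is therefore countable, so $\otp(X_1)\le\omone$. Hence $\otp(X)=\otp(X_0)+\otp(X_1)\le\omone\cdot\eta+\omone=\omone\cdot(\eta+1)<\Om$. This computation of $\otp(X_1)$, namely that $\omone$-many countable blocks placed in order add up to at most $\omone$, is the only step requiring care. It holds because the initial segments of $X_1$ below any element are countable unions of countable sets.

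\emph{$\II$ is a $\sigma$-ideal.} By the equivalence, $X\in\II$ if and only if $S_X$ is countable. This condition is clearly closed under subsets. For countable unions, let $X=\bigcup_n X_n$ with each $S_{X_n}$ countable. If $\xi\notin\bigcup_n S_{X_n}$, then each $X_n\cap R_\xi$ is countable, so $X\cap R_\xi$ is a countable union of countable sets and hence countable. Thus $S_X\subseteq\bigcup_n S_{X_n}$, which is countable. Finally, $\Om\notin\II$, so $\II$ is a proper $\sigma$-ideal.
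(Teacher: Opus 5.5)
Your proof is correct and matches the paper's argument. The three parts are the same: \(\omone\) copies of \(\omone\) for (2)\(\Rightarrow\)(1), a cut at a countable \(\eta\) above \(S_X\) followed by ``\(\omone\) many countable blocks sum to at most \(\omone\)'' for the converse, and the inclusion \(S_X\subseteq\bigcup_n S_{X_n}\) for the \(\sigma\)-ideal property. The only blemishes are cosmetic: you announce ``three pieces'' but use two, and the aside questioning an \(\omone\cdot\om\) bound should be deleted.
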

\iffalse
\begin{proof}
If \(S_X\) is uncountable, then it has order type \(\omone\).  For each
\(\xi\in S_X\), the set \(X\cap R_\xi\), in its inherited order, has
order type \(\omone\).  Hence \(X\) contains the ordinal sum of
\(\omone\) many copies of \(\omone\), and therefore has order type
\(\Om\).

Conversely, suppose that \(S_X\) is countable.  Choose
\(\delta<\omone\) above \(S_X\).  The part of \(X\) below
\(\omone\cdot\delta\) has order type below \(\Om\).  Above that point,
every row intersection is countable.  The ordinal sum of \(\omone\) many
countable ordinals has order type at most \(\omone\): every proper
initial sum is countable, and the full sum is their \(\omone\)-sequence
of suprema.  It follows that \(\otp(X)<\Om\).

For the final assertion, let \(X=\bigcup_{n<\om}X_n\) with each
\(X_n\in\II\).  By the characterization, each \(S_{X_n}\) is countable.
If \(\xi\notin\bigcup_nS_{X_n}\), then every \(X_n\cap R_\xi\) is
countable, so \(X\cap R_\xi\) is countable.  Thus
\(S_X\subseteq\bigcup_nS_{X_n}\) is countable and \(X\in\II\).
\end{proof}

\begin{remark}
The \(\sigma\)-ideal property is special to the two-level ordinal
\(\omone\cdot\omone\) and is the engine of the forcing argument.  Merely
knowing that the family of short sets is closed under finite unions would
not suffice.
\end{remark}
\fi

We use the convention that \(q\leq p\) means that \(q\) is the stronger
condition.

\begin{definition}
A subset \(L\) of a forcing notion is \emph{linked} if every two members
of \(L\) are compatible.  A forcing notion \(\PP\) is
\emph{locally \(\omone\)-\(\sigma\)-linked} if every
\(A\subseteq\PP\) of cardinality at most \(\aleph_1\) is the union of
countably many linked subsets.
\end{definition}

Clearly every \(\sigma\)-linked forcing is locally
\(\omone\)-\(\sigma\)-linked. 

\begin{theorem}\label{thm:preservation}
Let \(c:[\Om]^2\to2\) witness
\(\Om\notarrow(\Om,3)^2\).  If \(\PP\) is locally
\(\omone\)-\(\sigma\)-linked, then the same coloring \(c\) witnesses the
negative relation in every \(\PP\)-generic extension.
\end{theorem}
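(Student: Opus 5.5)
Since \(c\) is triangle-free in color \(1\) in the ground model and triangles are finite, \(c\) has no color-\(1\) triangle in \(V[G]\). The only thing to verify is that no new color-\(0\) homogeneous set of order type \(\Om\) appears. I argue by contradiction. Suppose \(p\forces\) ``\(\dot X\subseteq\Om\) is \(c\)-\(0\)-homogeneous and \(\otp(\dot X)=\Om\)''.

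First I pass to ground-model witnesses of size \(\aleph_1\). By Lemma~\ref{lem:rows}, \(p\) forces that \(S_{\dot X}\) is uncountable. Forcing with a locally \(\omone\)-\(\sigma\)-linked poset preserves \(\omone\): an uncountable antichain would be a countable union of linked sets, each containing at most one element of the antichain. So the poset is c.c.c. Hence the uncountable set \(S_{\dot X}\) in \(V[G]\) contains a ground-model uncountable set. Indeed, \(\{\xi:\exists q\le p\ (q\forces\xi\in S_{\dot X})\}\) is uncountable, and similarly inside rows. This lets me choose, in \(V\), the following data:
\begin{itemize}
\item an uncountable \(S\subseteq\omone\);
\item for each \(\xi\in S\), a set \(Y_\xi\subseteq R_\xi\) of size \(\aleph_1\);
\item for each \(\alpha\in Y_\xi\), a condition \(q_\alpha\le p\) with \(q_\alpha\forces\alpha\in\dot X\).
\end{itemize}
For the choice within a row: for \(\xi\) with some \(r\le p\) forcing \(|\dot X\cap R_\xi|=\aleph_1\), the set of \(\alpha\in R_\xi\) forced into \(\dot X\) by some extension of \(r\) is uncountable, again by c.c.c. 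Let \(A=\{q_\alpha:\alpha\in Y\}\), where \(Y=\bigcup_{\xi\in S}Y_\xi\). Then \(A\) has cardinality \(\aleph_1\), so by hypothesis \(A=\bigcup_{n}L_n\) with each \(L_n\) linked. Set \(Y^n=\{\alpha\in Y:q_\alpha\in L_n\}\).

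Next I extract a ground-model homogeneous set. If \(\alpha,\beta\in Y^n\), then \(q_\alpha,q_\beta\) have a common extension forcing \(\alpha,\beta\in\dot X\). Since \(c\) is a ground-model function and \(\dot X\) is forced \(0\)-homogeneous, this gives \(c(\{\alpha,\beta\})=0\). So each \(Y^n\) is a \(0\)-homogeneous set in \(V\). Now \(Y=\bigcup_n Y^n\) and \(\otp(Y)=\Om\) by Lemma~\ref{lem:rows}, because \(S_Y\supseteq S\) is uncountable. Since \(\II\) is a \(\sigma\)-ideal, some \(Y^n\notin\II\), that is, \(\otp(Y^n)=\Om\). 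This contradicts the assumption that \(c\) witnesses \(\Om\notarrow(\Om,3)^2\) in \(V\).

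The main obstacle is the first step: correctly reflecting the forced statement ``\(S_{\dot X}\) uncountable, rows of size \(\aleph_1\)'' into a ground-model family of size \(\aleph_1\) with one condition per point. I will handle this with the c.c.c. counting argument. If for a given \(\xi\) only countably many \(\alpha\in R_\xi\) could be forced into \(\dot X\) below \(r\), then \(r\) would force \(\dot X\cap R_\xi\) countable. Similarly, if only countably many \(\xi\) admit such an \(r\), then \(p\) forces \(S_{\dot X}\) countable. I only need compatibility below \(p\) and not a single stronger condition, which is exactly why linkedness of the pieces \(L_n\) suffices.
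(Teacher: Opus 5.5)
Your proof is correct and follows the paper's argument: one condition $q_\alpha\le p$ per point, a countable cover of these conditions by linked sets, pairwise compatibility giving $c(\{\alpha,\beta\})=0$ in the ground model, and the $\sigma$-ideal property of $\II$ to find a $0$-homogeneous piece of order type $\Om$. Your explicit check that the poset is c.c.c., so that $\omone$ and hence $\Om$ are unchanged, is a point the paper leaves implicit.

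Your row-by-row reflection is more than is needed. The paper simply lets $X$ be the ground-model set of all points that some $p_x\le p$ forces into the homogeneous set; then $p$ forces that set into $\check X$, and $\otp(X)=\Om$ follows at once. Also, that step only shows that $S_{\dot X}$ is \emph{contained in} an uncountable ground-model set, not that it contains one as you state; the latter can fail even for $\sigma$-centered forcing.
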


\begin{proof}
Suppose toward a contradiction that some \(p\in\PP\) and some name
\(\dot H\) satisfy
\[
 p\forces ``\dot H\subseteq\check\Om,
 \ \otp(\dot H)=\check\Om,
 \text{ and }\dot H\text{ is } 0\text{- homogeneous for }c.''
\]
For every \(x<\Om\) for which this is possible, choose
\(p_x\leq p\) with
\[
 p_x\forces \check x\in\dot H,
\]
and put
\[
 X=\{x<\Om:p_x\text{ was chosen}\}.
\]
Then \(p\forces\dot H\subseteq\check X\).  Hence \(X\notin\II\), for if
\(\otp(X)<\Om\), every subset of \(X\), in any forcing extension, has
order type below \(\Om\).

The family \(\{p_x:x\in X\}\) has cardinality at most \(\aleph_1\).
By local \(\omone\)-\(\sigma\)-linkedness, write it as
\(\bigcup_{n<\om}L_n\), where every \(L_n\) is linked, and set
\[
 X_n=\{x\in X:p_x\in L_n\}.
\]
Since \(X=\bigcup_nX_n\) and \(\II\) is a \(\sigma\)-ideal, some
\(X_n\notin\II\).  If \(x\ne y\) belong to this \(X_n\), then
\(p_x\) and \(p_y\) are compatible.  A common extension forces both
\(x\) and \(y\) into \(\dot H\), so necessarily \(c(\{x,y\})=0\).
Therefore \(X_n\) is a ground-model \(0\)-homogeneous set of order
type \(\Om\), contrary to the choice of \(c\).
\end{proof}

\begin{corollary}\label{cor:sigmalinked}
Every \(\sigma\)-linked, and hence every \(\sigma\)-centered, forcing
preserves every ground-model witness to
\(\omone^2\notarrow(\omone^2,3)^2\).
\end{corollary}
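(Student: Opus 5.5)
The corollary follows from Theorem~\ref{thm:preservation} once we check that $\sigma$-centered and $\sigma$-linked posets are locally $\omone$-$\sigma$-linked. So the plan is just to verify the two inclusions
\[
\sigma\text{-centered}\subseteq\sigma\text{-linked}\subseteq\text{locally }\omone\text{-}\sigma\text{-linked}.
\]

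\emph{First inclusion.} If $\PP=\bigcup_{n<\om}C_n$ with each $C_n$ centered, then any two members of $C_n$ have a common extension. So each $C_n$ is linked, and $\PP$ is $\sigma$-linked.

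\emph{Second inclusion.} Suppose $\PP=\bigcup_{n<\om}L_n$ with each $L_n$ linked, and let $A\subseteq\PP$ have size at most $\aleph_1$. Then
\[
A=\bigcup_{n<\om}(A\cap L_n),
\]
and a subset of a linked set is linked. Hence $A$ is a countable union of linked sets, as required. This is exactly the remark stated just before Theorem~\ref{thm:preservation}.

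\emph{Conclusion.} Let $c$ be a ground-model witness to $\omone^2\notarrow(\omone^2,3)^2$ and let $\PP$ be $\sigma$-linked. Theorem~\ref{thm:preservation} says that $c$ still witnesses $\Om\notarrow(\Om,3)^2$ in every $\PP$-generic extension. This is the claim, since $\Om=\omone\cdot\omone$ is computed the same way in the extension: c.c.c.\ forcing preserves $\omone$, and $\sigma$-linked posets are c.c.c.

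\emph{Possible obstacle.} Theorem~\ref{thm:preservation} has already done the real work, so I see no substantial obstacle. The only thing to check is that ``witness'' means the same in the extension: no $0$-homogeneous set of order type $\Om$ and no $1$-homogeneous triangle.
\begin{itemize}
\item The $0$-homogeneous clause is exactly what Theorem~\ref{thm:preservation} addresses.
\item The triangle-free clause is absolute between the ground model and the extension. Any triple of ordinals in the extension is a finite set of ground-model ordinals, so it already lies in the ground model, and there $c$ has no $1$-homogeneous triangle.
\end{itemize}
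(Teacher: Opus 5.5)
Your proof is correct and is exactly the argument the paper intends: the corollary follows from Theorem~\ref{thm:preservation} via the remark that every $\sigma$-linked poset is locally $\omone$-$\sigma$-linked, together with the fact that $\sigma$-centered implies $\sigma$-linked. Your extra checks are sound, though they are already implicit in the statement of Theorem~\ref{thm:preservation}: $\Om$ is unchanged since the forcing is c.c.c., and triangle-freeness of $c$ is absolute.
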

Now we recall a standard consequence of Martin's Axiom; see, for example,
\cite[Chapter 2]{BartoszynskiJudah1995}.

\begin{fact}\label{fact:ma-small}
Assume \(\MA_\kappa\).  Every c.c.c. forcing notion of cardinality at
most \(\kappa\) is \(\sigma\)-centered.
\end{fact}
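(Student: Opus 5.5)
The statement to prove is Fact~\ref{fact:ma-small}: under \(\MA_\kappa\), every c.c.c. forcing \(\PP\) with \(|\PP|\le\kappa\) is \(\sigma\)-centered. The plan is to add a generic object that encodes a partition of \(\PP\) into countably many centered pieces. We then show that this auxiliary forcing is c.c.c., and apply \(\MA_\kappa\) to \(\kappa\) many dense sets.

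\textbf{The auxiliary forcing.} Let \(\QQ\) consist of pairs \((s,F)\). Here \(s\) is a finite partial function from \(\om\times\PP\) to \(2\); one reads \(s(n,a)=1\) as ``\(a\) goes into piece \(n\)''. The second coordinate \(F\) is a finite set of triples \((n,a_1,\dots)\), or more simply \(s\) is required to satisfy the following: for each \(n\), the finite set \(\{a: s(n,a)=1\}\) has a common lower bound in \(\PP\). Ordering is by extension of \(s\).

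\textbf{Genericity.} For each \(a\in\PP\), let \(D_a\) be the set of conditions with \(s(n,a)=1\) for some \(n\). This set is dense: given \(s\), choose \(n\) not used in \(\operatorname{dom}(s)\) and put \(a\) alone into piece \(n\). There are at most \(\kappa\) such dense sets. A filter \(G\) meeting all of them yields the pieces \(C_n=\{a: s(n,a)=1 \text{ for some } s\in G\}\). These cover \(\PP\). Each \(C_n\) is centered, because any finitely many members of \(C_n\) appear together in a single \(s\in G\), since \(G\) is a filter. Hence \(\PP=\bigcup_n C_n\) is \(\sigma\)-centered.

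\textbf{The c.c.c. of \(\QQ\).} This is the main obstacle, and the proof will use the c.c.c. of \(\PP\) here. Take \(\aleph_1\) conditions \(s_\alpha\). By a \(\Delta\)-system argument and counting, we may assume the following: the domains form a \(\Delta\)-system, the conditions agree on the root, and for each \(n\) the sets \(A^\alpha_n=\{a:s_\alpha(n,a)=1\}\) have the same size and agree on the root. Fix a lower bound \(b^\alpha_n\) of \(A^\alpha_n\) for each relevant \(n\). Only finitely many \(n\) are involved, say \(n_1,\dots,n_k\), and these are the same for all \(\alpha\) after thinning.

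We need \(\alpha\ne\beta\) with \(b^\alpha_{n_i}\) compatible with \(b^\beta_{n_i}\) for every \(i\) simultaneously. Compatibility for a single \(i\) comes straight from the c.c.c. of \(\PP\). For all \(i\) at once, the first plan is to use the c.c.c. of the finite power \(\PP^k\), applied to the tuples \((b^\alpha_{n_1},\dots,b^\alpha_{n_k})\). Finite powers of c.c.c. posets need not be c.c.c. in ZFC, but under \(\MA_{\aleph_1}\), which follows from \(\MA_\kappa\) when \(\kappa\ge\aleph_1\), every c.c.c. poset has the property that any uncountable set contains an uncountable centered subset, so finite products are c.c.c. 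Equivalently, one applies that Knaster-type consequence directly: pick an uncountable \(\Gamma\) with \(\{b^\alpha_{n_1}:\alpha\in\Gamma\}\) linked, then refine for \(n_2\), and so on. Two conditions from the final \(\Gamma\) are then compatible in \(\QQ\): take the union of the two partial functions, and a common lower bound of \(b^\alpha_{n_i}\) and \(b^\beta_{n_i}\) bounds the merged piece \(n_i\). That establishes the c.c.c. of \(\QQ\).

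If \(\kappa<\aleph_1\), the statement is trivial via a countable enumeration. Otherwise the argument above completes the proof.
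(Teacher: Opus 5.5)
The paper gives no proof of this Fact and only cites Bartoszy\'nski--Judah; your argument is correct and is essentially the standard one. Your poset of finite assignments of elements of \(\PP\) to pieces \(n\) plays the role of the usual finite-support power of \(\PP\), and its c.c.c.\ rests on the same key input: \(\MA_{\aleph_1}\) gives every c.c.c.\ poset precaliber \(\aleph_1\), hence property K, hence c.c.c.\ finite powers. You should delete the abandoned pair \((s,F)\) from your definition of \(\QQ\), since the version with \(s\) alone is the one your proof actually uses.
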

The following is an immediate corollary of the above fact.
\begin{lemma}\label{lem:ma-local}
Assume \(\MA_{\aleph_1}\).  Every c.c.c. forcing notion is locally
\(\omone\)-\(\sigma\)-centered, and hence locally
\(\omone\)-\(\sigma\)-linked.
\end{lemma}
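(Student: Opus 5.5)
The plan is to reduce to Fact~\ref{fact:ma-small} by passing to a small c.c.c. subposet that contains the given family. Let \(\PP\) be c.c.c. and let \(A\subseteq\PP\) with \(|A|\le\aleph_1\). I will build \(Q\subseteq\PP\) such that \(A\subseteq Q\), \(|Q|\le\aleph_1\), and \(Q\) is c.c.c. as a suborder. I will also ensure that compatibility in \(Q\) agrees with compatibility in \(\PP\); that is, whenever \(q,r\in Q\) are compatible in \(\PP\), some common extension already lies in \(Q\).

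For the construction, fix a function \(f:[\PP]^2\to\PP\) that assigns to each compatible pair a common extension. Then take the closure of \(A\) under \(f\). Since this is countably many iterations of closing under a binary operation starting from a set of size \(\aleph_1\), we get \(Q\) with \(|Q|\le\aleph_1\). Incompatibility in \(Q\) then coincides with incompatibility in \(\PP\). So an antichain of \(Q\) is an antichain of \(\PP\), hence countable, and \(Q\) is c.c.c.

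Applying Fact~\ref{fact:ma-small} with \(\kappa=\aleph_1\), \(Q\) is \(\sigma\)-centered: \(Q=\bigcup_n C_n\), where each finite subset of \(C_n\) has a lower bound in \(Q\). Such a lower bound is also a lower bound in \(\PP\). Hence \(A=\bigcup_n(A\cap C_n)\) is a countable union of sets that are centered in \(\PP\). This proves local \(\omone\)-\(\sigma\)-centeredness, and centered sets are linked, which gives the second clause.

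The only delicate point is making sure the smallness transfer is legitimate. A bare subset of \(\PP\) need not be c.c.c. as a suborder, because pairs that are compatible in \(\PP\) may become incompatible inside the subset. That is exactly why \(Q\) must be closed under the choice of common extensions. The downward direction needs no care, since lower bounds found inside \(Q\) remain lower bounds in \(\PP\).
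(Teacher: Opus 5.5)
Your proof is correct and follows the paper's overall strategy: pass to a c.c.c.\ structure of size at most $\aleph_1$ that contains (a copy of) $A$ and whose antichains are antichains of the big forcing, apply Fact~\ref{fact:ma-small}, and pull the centered pieces back to $A$. What differs is the choice of small structure. The paper takes the Boolean subalgebra $B$ of $\RO(\PP)$ generated by the regular-open values of the members of $A$. There, c.c.c.\ of $B^+$ is automatic, because meets in $B$ are computed in $\RO(\PP)$, so no choice function or closure argument is needed. The cost moves to the pull-back step. A lower bound in $B^+$ for the values of $a_1,\dots,a_k\in A$ only shows that their meet in $\RO(\PP)$ is nonzero. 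One still has to argue that $a_1,\dots,a_k$ have a common extension in $\PP$ itself, using density of the image of $\PP$ and, for non-separative $\PP$, a short chain of successive extensions; the paper leaves this step implicit. In your version, closing under a chosen common-extension function plays the role of the Boolean-algebra closure in securing c.c.c., as you correctly identify. It also makes the pull-back trivial, since a lower bound in $Q$ is literally a lower bound in $\PP$. So your route is a bit more elementary and self-contained, while the paper's is more canonical.
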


\begin{proof}
Let \(A\subseteq\PP\) have size at most \(\aleph_1\), where \(\PP\) is
c.c.c.   Let \(B\) be the
Boolean subalgebra of  \(\RO(\PP)\) generated by the regular-open values of the members of
\(A\).  Then \(|B|\leq\aleph_1\), and \(B^+\) is c.c.c.  By Fact
\ref{fact:ma-small}, write
\[
 B^+=\bigcup_{n<\om}C_n
\]
with each \(C_n\) centered.  Pulling the \(C_n\)'s back to \(A\) gives a
countable cover of \(A\) by subfamilies every finite part of which has a
common extension in \(\PP\).
\end{proof}

Theorem \ref{thm:intro-ma} is now immediate from Lemma
\ref{lem:ma-local} and Theorem \ref{thm:preservation}.

\begin{corollary}\label{cor:ma-pres}
In a model of \(\MA_{\aleph_1}\), every c.c.c. forcing preserves every
already existing witness to
\(\omone^2\notarrow(\omone^2,3)^2\).
\end{corollary}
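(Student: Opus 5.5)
The plan is to combine Lemma~\ref{lem:ma-local} with Theorem~\ref{thm:preservation}; Corollary~\ref{cor:ma-pres} is just a restatement of Theorem~\ref{thm:intro-ma}. Work in a model \(V\) of \(\MA_{\aleph_1}\). Fix a coloring \(c:[\Om]^2\to2\) in \(V\) witnessing \(\Om\notarrow(\Om,3)^2\), and fix a c.c.c. poset \(\PP\in V\). We must show that in \(V[G]\), for any \(\PP\)-generic \(G\), the same \(c\) still has no \(0\)-homogeneous set of order type \(\Om\) and no \(1\)-homogeneous triangle.

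\emph{The triangle clause.} This part is absolute and needs no forcing argument. A \(1\)-homogeneous triple is a finite subset of \(\Om\) and would already belong to \(V\). Being \(1\)-homogeneous for \(c\) is decided by \(c\) itself, so no such triple can appear in \(V[G]\) that was not already present in \(V\). Likewise, \(\Om=\omone\cdot\omone\) is computed the same way in \(V[G]\), since c.c.c. forcing preserves \(\omone\). This is what makes the statement ``the same coloring witnesses the relation in the extension'' meaningful.

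\emph{The independent-set clause.} Apply Lemma~\ref{lem:ma-local} in \(V\): since \(\MA_{\aleph_1}\) holds and \(\PP\) is c.c.c., \(\PP\) is locally \(\omone\)-\(\sigma\)-centered. In particular every subset of \(\PP\) of size at most \(\aleph_1\) is a countable union of centered, hence linked, sets, so \(\PP\) is locally \(\omone\)-\(\sigma\)-linked. Theorem~\ref{thm:preservation} then applies directly: \(c\) admits no \(0\)-homogeneous set of order type \(\Om\) in \(V[G]\).

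\emph{Where the work lies.} The only point needing care is that Theorem~\ref{thm:preservation} is stated for a witness in the ground model over which \(\PP\) is forced. So local \(\omone\)-\(\sigma\)-linkedness must be verified in \(V\) itself, not in some intermediate model. This is exactly what Lemma~\ref{lem:ma-local} gives, because \(\MA_{\aleph_1}\) is assumed in \(V\). With that observed, no further obstacle is expected.
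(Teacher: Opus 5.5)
Your proposal is correct and matches the paper's argument. You apply Lemma~\ref{lem:ma-local} in the ground model, where \(\MA_{\aleph_1}\) holds, to get that \(\PP\) is locally \(\omone\)-\(\sigma\)-linked, and then Theorem~\ref{thm:preservation} shows that \(c\) remains a witness. Your remarks on the triangle clause and on \(\omone\) being preserved spell out what the paper leaves implicit in Theorem~\ref{thm:preservation}.
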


\section{A restricted forcing-axiom model}

We next prove Theorem \ref{thm:intro-consistency}.  We use the standard
iteration fact that a finite-support iteration of
\(\sigma\)-centered forcings of length strictly below
\(\frakc^+\), where \(\frakc\) is computed in the ground model, is
\(\sigma\)-centered; see
\cite{Tall1994,FischerKoelbingWohofsky2023}.

\begin{theorem}\label{thm:restricted-model}
There is, relative to ZFC, a model satisfying
\[
 \MA_{\aleph_1}(\sigma\text{-centered})
 +\frakc=\aleph_2
 +\frakb=\frakd=\frakc=\aleph_2.+\omone^2\notarrow(\omone^2,3)^2.
\]
\end{theorem}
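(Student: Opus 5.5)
We start from a ground model $V$ of GCH (so in particular CH). By Hajnal's theorem (or by \eqref{eq:larson}, since CH gives $\frakd=\aleph_1$), $V$ contains a coloring $c:[\Om]^2\to2$ witnessing $\omone^2\notarrow(\omone^2,3)^2$. We then define a finite-support iteration $\langle \PP_\alpha,\dot\QQ_\alpha:\alpha<\omega_2\rangle$ in which each $\dot\QQ_\alpha$ is forced to be a $\sigma$-centered poset of size at most $\aleph_1$. We choose the iterands by a bookkeeping function $\omega_2\to\omega_2\times\omega_2$ (using $2^{\aleph_1}=\aleph_2$), so that every $\PP_\alpha$-name for a $\sigma$-centered poset of size $\aleph_1$ on $\omone$ is eventually handled. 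At cofinally many stages, say even ones, we interleave Hechler forcing $\mathbb D$, which is $\sigma$-centered.

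The key point is that $\PP_{\omega_2}$ is $\sigma$-centered. Each $\PP_\alpha$ for $\alpha<\omega_2=\frakc^+$ (with $\frakc=\aleph_1$ in $V$) is $\sigma$-centered by the cited iteration fact. Hence, by Corollary \ref{cor:sigmalinked}, $c$ remains a witness in $V^{\PP_\alpha}$ for every $\alpha<\omega_2$. The full length $\omega_2$ is not below $\frakc^+$, so we cannot simply quote the fact there. This is the main obstacle, and I would bypass it with Theorem \ref{thm:preservation} by showing that $\PP_{\omega_2}$ is locally $\omone$-$\sigma$-linked. Take $A\subseteq\PP_{\omega_2}$ with $|A|\le\aleph_1$. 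Since supports are finite and $\omega_2$ has cofinality $\omega_2>\aleph_1$, there is $\alpha<\omega_2$ with $A\subseteq\PP_\alpha$. Since $\PP_\alpha$ is $\sigma$-centered, $A$ is a countable union of centered, hence linked, sets. Compatibility in $\PP_\alpha$ implies compatibility in $\PP_{\omega_2}$, because $\PP_\alpha$ is a complete suborder. So $c$ witnesses the negative relation in $V^{\PP_{\omega_2}}$. An alternative route uses the chain of names directly: a name $\dot H$ for a homogeneous set is not small, so we use the preservation theorem instead.

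Next we verify the remaining clauses, all of which are standard. The iteration is c.c.c. and has size $\aleph_2$, so $\frakc\le\aleph_2$. The Hechler reals added cofinally often give $\frakb\ge\aleph_2$. Any $\aleph_1$ reals appear at some stage $\alpha$, and the Hechler real added later dominates them. Hence $\frakb=\frakd=\frakc=\aleph_2$.

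For $\MA_{\aleph_1}(\sigma\text{-centered})$, we use the usual reduction. It suffices to meet $\aleph_1$ dense sets in $\sigma$-centered posets of size $\aleph_1$. To see this, take a $\sigma$-centered $\QQ$ with dense sets $D_i$ for $i<\omone$. Pass to a suborder of size $\aleph_1$ that is closed under witnesses for the $D_i$ and for the centering. This suborder is still $\sigma$-centered, since the restrictions of the centered pieces remain centered. Such a $\QQ$ and the $D_i$ have names in some $V^{\PP_\alpha}$ by the chain condition, so the bookkeeping forces with it at a later stage $\beta$. One must note that $\QQ$ stays $\sigma$-centered in $V^{\PP_\beta}$: the countably many centered pieces remain centered, because centeredness is a property of finite subsets. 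The $\QQ$-generic over $V^{\PP_\beta}$ then meets all the $D_i$.
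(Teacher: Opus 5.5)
Your proposal is correct and is essentially the paper's proof: Hajnal's coloring from CH, a finite-support $\omega_2$-iteration of $\sigma$-centered posets whose proper initial segments $\PP_\alpha$ are $\sigma$-centered by the $\frakc^+$ iteration fact, the observation that any $\aleph_1$ conditions lie in some $\PP_\alpha$ (so $\PP_{\omega_2}$ is locally $\omone$-$\sigma$-linked), and then Theorem~\ref{thm:preservation}. Using Hechler rather than Cohen reals to get $\frakb=\frakd=\aleph_2$ directly is an inessential variation, but do delete your opening claim that $\PP_{\omega_2}$ itself is $\sigma$-centered: over a CH ground model it is not, and, as you then correctly note, only the local property is needed.
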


\begin{proof}
Start with a model of GCH.  By Hajnal's theorem, fix a coloring
\(c:[\Om]^2\to2\) witnessing the negative relation.
Construct a finite-support iteration
\[
 \langle\PP_\alpha,\dot\QQ_\alpha:\alpha<\om_2\rangle
\]
of \(\sigma\)-centered forcing notions.  Use the usual bookkeeping so
that every name for a \(\sigma\)-centered forcing together with a family
of at most \(\aleph_1\) dense subsets is handled at a later stage.  Put a
Cohen forcing at cofinally many stages.  The standard nice-name argument
then gives
\[
 \forces_{\PP_{\om_2}}
 ``\MA_{\aleph_1}(\sigma\text{-centered})
 \text{ and }\frakc=\aleph_2.''
\]
It remains to verify that \(c\) survives.  For every
\(\alpha<\om_2\), the ordinal \(\alpha\) has cardinality at most
\(\aleph_1=\frakc^V\).  The iteration theorem quoted above therefore
implies that \(\PP_\alpha\) is \(\sigma\)-centered.

Now let \(A\subseteq\PP_{\om_2}\) have cardinality at most
\(\aleph_1\).  By standard arguments,  \(A\subseteq\PP_\alpha\) for some
\(\alpha<\om_2\).  Since \(\PP_\alpha\) is \(\sigma\)-centered, so is
\(A\).  Hence the final forcing \(\PP_{\om_2}\) is locally
\(\omone\)-\(\sigma\)-centered.  Theorem \ref{thm:preservation} shows
that \(c\) remains a negative witness in the final extension.

Finally note that $\MA_{\aleph_1}(\sigma\text{-centered})
 +\frakc=\aleph_2$   implies $\frakb=\frakd=\frakc=\aleph_2,$ which completes the proof.
\end{proof}
Combining Theorem \ref{thm:restricted-model} with Larson's theorem
\eqref{eq:larson} gives a useful methodological conclusion.

\begin{corollary}\label{cor:not-short-scale}
It is consistent that
\[
 \frakd=\aleph_2
 \quad\text{and}\quad
 \omone^2\notarrow(\omone^2,3)^2.
\]
Consequently, the implication
\[
 \omone^2\notarrow(\omone^2,3)^2
 \quad\Longrightarrow\quad
 \frakd=\aleph_1
\]
is not provable in ZFC.
\end{corollary}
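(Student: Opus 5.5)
The corollary follows by combining Theorem \ref{thm:restricted-model} with Larson's theorem \eqref{eq:larson}. For the first assertion, I take the model produced in Theorem \ref{thm:restricted-model}. There \(\frakd=\frakc=\aleph_2\), so in particular \(\frakd=\aleph_2\). The Hajnal coloring \(c\) fixed in the GCH ground model still witnesses \(\omone^2\notarrow(\omone^2,3)^2\) in the final extension.

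The points to check are those in the proof of Theorem \ref{thm:restricted-model}. First, every \(A\subseteq\PP_{\om_2}\) of size at most \(\aleph_1\) lies in some initial segment \(\PP_\alpha\) with \(\alpha<\om_2\). This holds because conditions have finite support and \(\cf(\om_2)>\aleph_1\); the names appearing in a condition are also decided at bounded stages, by the usual nice-name bookkeeping. Second, each such \(\PP_\alpha\) is \(\sigma\)-centered, since \(|\alpha|\le\aleph_1=\frakc^V<(\frakc^V)^+\). Together these show that \(\PP_{\om_2}\) is locally \(\omone\)-\(\sigma\)-linked, so Theorem \ref{thm:preservation} applies.

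The one subtle point is covering \(A\) by centered pieces. We can cover \(\PP_\alpha\) by countably many centered families, which restrict to a cover of \(A\), and centeredness is witnessed inside \(\PP_\alpha\), hence also inside \(\PP_{\om_2}\), because \(\PP_\alpha\) is a complete suborder.

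Next, \(\frakd\ge\frakb=\aleph_2\) needs a brief justification. Under \(\MA_{\aleph_1}(\sigma\text{-centered})\), Hechler forcing is \(\sigma\)-centered. Given \(\aleph_1\) many reals, meeting the relevant \(\aleph_1\) dense sets yields a real dominating all of them, so \(\frakb>\aleph_1\). Since \(\frakb\le\frakd\le\frakc=\aleph_2\), we get \(\frakb=\frakd=\aleph_2\).

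For the second assertion, I argue by contradiction. Suppose ZFC proved that \(\omone^2\notarrow(\omone^2,3)^2\) implies \(\frakd=\aleph_1\). This implication would then hold in the model just described. That model satisfies the hypothesis but has \(\frakd=\aleph_2\neq\aleph_1\), a contradiction.

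All of this is relative to the consistency of ZFC, since the GCH ground model comes from \(L\). So the statement should be read as: if ZFC is consistent, then ZFC does not prove the implication.

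Larson's theorem \eqref{eq:larson} is not used logically. Its role is methodological: \(\frakd=\aleph_1\) is sufficient for the negative relation but is not necessary for it.
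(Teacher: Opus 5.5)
Your proposal is correct and takes the paper's route: the corollary is read off directly from the model of Theorem \ref{thm:restricted-model}, and Larson's theorem only supplies the context that $\frakd=\aleph_1$ is sufficient (not necessary) for the negative relation, as you note. Your added checks (bounding supports below $\om_2$, $\sigma$-centeredness of each $\PP_\alpha$, the Hechler argument for $\frakb=\aleph_2$) just fill in what the paper calls standard; two small wording points are that Hechler forcing is $\sigma$-centered outright rather than ``under $\MA$'', and that nothing about names needs to be ``decided'' for a condition with support below $\alpha$ to lie in $\PP_\alpha$.
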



\begin{thebibliography}{99}

\bibitem{BartoszynskiJudah1995}
T.~Bartoszy\'nski and H.~Judah,
\emph{Set Theory: On the Structure of the Real Line},
A K Peters, Wellesley, MA, 1995.

\bibitem{Baumgartner1989}
J.~E.~Baumgartner,
Remarks on partition ordinals,
in \emph{Set Theory and its Applications},
Lecture Notes in Mathematics 1401, Springer, 1989, pp.~5--17.
\href{https://doi.org/10.1007/BFb0097328}
{doi:10.1007/BFb0097328}.

\bibitem{ErdosProblem1171}
Erd\H{o}s Problems,
Problem 1171,
\url{https://www.erdosproblems.com/1171}, accessed 13 August 2026.

\bibitem{ErdosHajnal1971}
P.~Erd\H{o}s and A.~Hajnal,
Unsolved problems in set theory,
in \emph{Axiomatic Set Theory},
Proc. Sympos. Pure Math. XIII, Part I,
Amer. Math. Soc., 1971, pp.~17--48.

\bibitem{ErdosHajnal1974}
P.~Erd\H{o}s and A.~Hajnal,
Unsolved and solved problems in set theory,
in \emph{Proceedings of the Tarski Symposium},
Proc. Sympos. Pure Math. XXV,
Amer. Math. Soc., 1974, pp.~269--287.

\bibitem{ErdosProblemPage}
Erd\H{o}s Problems,
Ordinal Ramsey: \(\omone^2\to(\omone^2,3)^2\),
\url{https://mathweb.ucsd.edu/~erdosproblems/erdos/newproblems/OrdinalRamsey4.html},
accessed 13 August 2026.

\bibitem{FischerKoelbingWohofsky2023}
V.~Fischer, M.~Koelbing, and W.~Wohofsky,
Towers, mad families, and unboundedness,
\emph{Arch. Math. Logic} \textbf{62} (2023), 811--830.
\href{https://doi.org/10.1007/s00153-023-00861-x}
{doi:10.1007/s00153-023-00861-x}.

\bibitem{Hajnal1971}
A.~Hajnal,
A negative partition relation,
\emph{Proc. Nat. Acad. Sci. U.S.A.} \textbf{68} (1971), 142--144.
\href{https://doi.org/10.1073/pnas.68.1.142}
{doi:10.1073/pnas.68.1.142}.

\bibitem{Komjath2025}
P.~Komj\'ath,
The Erd\H{o}s--Hajnal problem list,
\emph{Bull. Symbolic Logic} \textbf{31} (2025), 418--461.
\href{https://doi.org/10.1017/bsl.2025.1}
{doi:10.1017/bsl.2025.1}.

\bibitem{Larson1998}
J.~A.~Larson,
An ordinal partition from a scale,
in C.~A.~Di Prisco, J.~A.~Larson, J.~Bagaria, and A.~R.~D.~Mathias
(eds.), \emph{Set Theory},
Kluwer, 1998, pp.~109--125.
\href{https://doi.org/10.1007/978-94-015-8988-8_8}
{doi:10.1007/978-94-015-8988-8\_8}.

\bibitem{Tall1994}
F.~D.~Tall,
\(\sigma\)-centred forcing and reflection of (sub)metrizability,
\emph{Proc. Amer. Math. Soc.} \textbf{121} (1994), 299--306.

\end{thebibliography}
\end{document}